\theoremstyle{plain}
\newtheorem{thm}{Theorem}[section]
\newtheorem{lem}[thm]{Lemma}
\newtheorem{cor}[thm]{Corollary}
\theoremstyle{definition}
\theoremstyle{remark}
\newtheorem{rem}[thm]{Remark}
\newcommand{\N}{{\mathbb{N}}}
\newcommand{\Z}{{\mathbb{Z}}}
\renewcommand{\P}{\mathcal{P}}
\newcommand{\F}{\mathbb{F}}
\newcommand{\I}{I}
\newcommand{\st}{\: : \: }
\newcommand{\leqsd}{\leq_{\textup{sd}}}
\newcommand{\leqlex}{\leq_{\textup{sl}}}
\newcommand{\lelex}{<_{\textup{sl}}}
\newcommand{\kN}[2]{\,_{#1} #2}
\DeclareMathOperator{\tr}{tr}
\newcommand{\SL}[2]{\mathrm{SL}_{#1}(#2)}
\newcommand{\Aut}{\mathrm{Aut}}
\date{\today}
\thanks{The first two authors were supported by the National Science Foundation under Grant No. DMS 1500254.}
\begin{document}


\title[Quotients of subdirect products of perfect groups]{Solvable quotients of subdirect products of perfect groups are nilpotent}

\author[K. Kearnes]{Keith Kearnes}
\author[P. Mayr]{Peter Mayr}
\address[K. Kearnes and P. Mayr]{Department of Mathematics, CU Boulder, USA}
\email{keith.kearnes@colorado.edu, peter.mayr@colorado.edu}
\author[N. Ru\v{s}kuc]{Nik Ru\v{s}kuc}
\address[N. Ru{\v{s}}kuc]{School of Mathematics and Statistics, University of St Andrews, St Andrews, Scotland, UK}
\email{nik.ruskuc@st-andrews.ac.uk}

\begin{abstract}
  We prove the statement in the title and
 exhibit examples of quotients of arbitrary nilpotency class. This answers a question by D.~F.~Holt.
\end{abstract}

\keywords{Subdirect product, subgroup, perfect group, nilpotent}
\subjclass[2010]{Primary: 20F14, Secondary: 20E22.}

\maketitle

\section{Introduction}

A subgroup $S$ of a direct product $G_1\times\dots\times G_n$ is said to be \emph{subdirect} if its projection to
each of the factors $G_i$ is surjective.
Subdirect products have recently been a focus of interest from combinatorial and algorithmic point of view; see,
for example, \cite{bridson}.
They are also frequently used in computational finite group theory, notably to  construct perfect groups;
see \cite{holtplesken}.

In September 2017 D.~F.~Holt asked on the group pub forum whether a solvable quotient of a subdirect product of
perfect groups is necessarily abelian. The question was re-posted on mathoverflow~\cite{holtqu} where Holt gave
some background:
``This problem arose in a study of the complexity of certain algorithms for finite permutation and matrix groups. The group $S$ in the applications is the kernel of the action of a transitive but imprimitive permutation (or matrix) group on a block system. So in that situation the $G_i$ are all isomorphic, and $\Aut(S)$ induces a transitive action on the direct factors $G_i$ ($1\leq i\leq n$).''

In the present note we start from the observation that every solvable quotient of a subdirect product of perfect
groups is nilpotent, proved in Section \ref{sec:nilp}, and then proceed to construct several examples demonstrating
that the nilpotency class can be arbitrarily high.
Specifically, in Section \ref{sec:infG} we exhibit an infinite perfect group $G$
such that for every $d$ there is a subdirect subgroup of $G^{2^d}$ with a nilpotent quotient of degree $d$.
In Section \ref{sec:finG1} we exhibit a perfect subgroup $G$ of the special linear group
$\SL{6}{\F_4}$ such that $G^4$ contains a subdirect subgroup with a nilpotent quotient of class $2$. 
This example is generalized in Section \ref{sec:finG2} to a perfect subgroup $G$ of the special linear group
$\SL{2n}{\F_4}$ such that $G^{2^{n-1}}$ contains a subdirect subgroup with a nilpotent quotient of class $n-1$.
These examples are all based on a particular subdirect product construction, which is described in Section \ref{sec:constr},
and whose lower central series is computed in Section \ref{sec:lcs}.

Throughout the paper we will use the following notation.
Let $\mathbb{N}$ denote the set of positive integers.
For $n\in\mathbb{N}$, we write $[n]$ to denote $\{1,\dots,n\}$.
For elements $x,y$ of a group $G$ we write
$x^y:=y^{-1}xy$ and $[x,y]:=x^{-1}y^{-1}xy$.
The 
 higher
commutators in a group $G$ will always  be assumed to be left associated, i.e.
\[
[x_1,\dots,x_k]:=\bigl[ [x_1,x_2],\dots,x_k\bigr],\ 
[N_1,\dots,N_k]:=\bigl[ [N_1,N_2],\dots,N_k\bigr]
\]
for $x_i\in G$ and $N_i\unlhd G$.
Furthermore, for $k\geq 1$ we let
\[
  \gamma_k(G):=[\underbrace{G,\dots,G}_k],
\ \gamma_1(G) := G,
\]
 denote the $k$th term in the lower central series of $G$ and, for $N\unlhd G$ and $k\geq 0$, write
\[
[G, \kN{k}{N}]:=[G,\underbrace{N,\dots,N}_k],\ 
[G,\kN{0}{N}] := G.
\]

The following two facts about commutators for a group $G$ with a normal subgroup $N$ will frequently be used
($k,l\in\N, m\in\N\cup\{0\}$):

\begin{enumerate}[label=\textsf{[C\arabic*]}, widest=(C2), leftmargin=10mm]
\item
\label{it:C1}
$[\gamma_k(N),\gamma_l(N)]\leq \gamma_{k+l}(N)$;
\item
\label{it:C2}
$\bigl[[G, \kN{m}{N}],\gamma_l(N)\bigr]\leq [G, \kN{m+l}{N}]$.
\end{enumerate}
The first can be found in
\cite[5.1.11(i)]{robinson}.
The second follows by induction on $l$.
The case $l = 1$ holds by the definition of $[G,\kN{k}{N}]$ and the fact that $\gamma_1(N)=N$.
 For $l>1$ we have  
\begin{align*} 
  \bigl[[G, \kN{m}{N}],\gamma_l(N)\bigr] & \leq \Bigl[\bigl[[G, \kN{m}{N}],N\bigl],\gamma_{l-1}(N)\Bigr] \cdot \Bigl[\bigl[[G, \kN{m}{N}],\gamma_{l-1}(N)\bigl],N\Bigr] \\
  & \mbox{}\hspace{5cm} \text{by the Three Subgroup Lemma} \\
                                        & = \bigl[[G, \kN{m+1}{N}],\gamma_{l-1}(N)\bigr] \cdot \Bigl[\bigl[[G, \kN{m}{N}],\gamma_{l-1}(N)\bigr],N\Bigr] \\
                                        & \leq [G, \kN{m+l}{N}] \cdot \bigl[[G, \kN{m+l-1}{N}],N\bigr] \text{  by induction} \\
& = [G, \kN{m+l}{N}].
\end{align*}

\section{Solvable quotients must be nilpotent}
\label{sec:nilp}

That solvable quotients of subdirect products of perfect groups are necessarily nilpotent 
is an easy consequence of the following more general observation.

\begin{thm} \label{thm1}
 Let $N\unlhd S\leqsd G_1\times \dots\times G_n$ such that $N,S$ are both subdirect products of groups $G_1,\dots,G_n$.
 Then $S/N$ is nilpotent of class at most $n-1$.
\end{thm}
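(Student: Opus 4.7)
I would proceed by induction on the number of factors $n$. The base case $n = 1$ is trivial: subdirectness forces $S = N = G_1$, and $S/N$ is the trivial group. For the inductive step, let $p \colon S \to G_1 \times \cdots \times G_{n-1}$ denote the projection onto the first $n-1$ factors, with kernel $L := S \cap (\{1\}^{n-1} \times G_n)$. The images $S' := p(S)$ and $N' := p(N)$ are subdirect products of $G_1, \dots, G_{n-1}$ with $N' \unlhd S'$, so by the induction hypothesis $S'/N'$ has class at most $n - 2$, i.e.\ $\gamma_{n-1}(S') \leq N'$. Pulling back along $p$ gives $\gamma_{n-1}(S) \leq p^{-1}(N') = NL$.

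The crux is then a centrality argument in $\bar{S} := S/N$. Let $K := S \cap (G_1 \times \cdots \times G_{n-1} \times \{1\})$, the kernel of the $n$-th coordinate projection restricted to $S$. Both $L$ and $K$ are normal in $S$ with $L \cap K = S \cap \{1\}^n = 1$, so $[L,K] = 1$ (two normal subgroups with trivial intersection commute). Subdirectness of $N$ yields that $N$ projects onto $G_n$, whence $S = NK$. Writing $\bar{L}, \bar{K}$ for the images in $\bar{S}$, we obtain $\bar{K} = \bar{S}$ and $[\bar{L}, \bar{S}] = [\bar{L}, \bar{K}] = 1$, i.e.\ $\bar{L} \leq Z(\bar{S})$. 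Combining this with $\gamma_{n-1}(\bar{S}) \leq \bar{L}$ from the previous step gives $\gamma_n(\bar{S}) = [\gamma_{n-1}(\bar{S}), \bar{S}] \leq [Z(\bar{S}), \bar{S}] = 1$, as required.

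The conceptual heart of the proof is that the last slice $L$ becomes central modulo $N$; this relies on (i) $L$ and $K$ commuting, which is automatic from normality and trivial intersection, and (ii) $S = NK$, for which the subdirectness of $N$ (and not merely of $S$) is essential. I do not anticipate a serious technical obstacle: the commutator manipulations needed are standard, and once the right objects $L$, $K$ are identified, the induction essentially writes itself.
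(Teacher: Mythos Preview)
Your proof is correct. It differs from the paper's in organization: the paper gives a direct, symmetric argument rather than an induction. There one introduces \emph{all} projection kernels $K_i\unlhd S$ ($i=1,\dots,n$) at once, observes $\bigcap_i K_i=1$, and uses subdirectness of $N$ to get $K_iN=S$ for every $i$; then
\[
\gamma_n(S)=[K_1N,\dots,K_nN]\leq [K_1,\dots,K_n]\,N\leq\Bigl(\bigcap_i K_i\Bigr)N=N
\]
in one line. Your induction is the ``one factor at a time'' unwrapping of this: your $K$ is the paper's $K_n$, your $L$ is $\bigcap_{i<n}K_i$, and the centrality step $[\bar L,\bar S]=1$ is the $n$th commutator bracket in the paper's chain. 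Both rest on the same key observation $S=K_iN$ (which is exactly where subdirectness of $N$, not just $S$, enters). The paper's version is shorter and treats all coordinates symmetrically; your version makes the role of the last factor and the centrality mechanism more explicit, which some readers may find clearer.
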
 
  
\begin{proof}
 For $i=1,\dots,n$, let $K_i\unlhd S$ be the kernel of the projection of $S$ onto $G_i$. 
Notice that $\bigcap_{i=1}^n K_i=1$.
 Moreover we claim that
\begin{equation} \label{eq:KiN}
  K_iN = S  \text{ for all } i=1,\dots,n.
\end{equation}
 For the inclusion $\geq$, let $s\in S$. Since the projection of $N$ onto $G_i$ is onto by assumption,
 we have $b\in N$ such that $b_i = s_i$. Then $sb^{-1}$ is in $K_i$ which proves~\eqref{eq:KiN}. 
 It now follows that
\[
  \gamma_n(S)=[K_1N,\dots,K_nN]\leq[K_1,\dots,K_n] N.
\]
 Since $[K_1,\dots,K_n]\leq \bigcap_{i=1}^n K_i=1$,
 we obtain $\gamma_n(S)\leq N$.
\end{proof}

\begin{rem}
 The analogue of Theorem~\ref{thm1} holds, with exactly the same proof, in any congruence modular variety when
 the normal subgroup $N$ is replaced  by a congruence $\nu$ of the subdirect product $S$ such that the join of
 $\nu$ with any projection kernel $\rho_i$ ($i\leq n$) yields the total congruence on $S$.
 In fact, then $S/\nu$ is supernilpotent of class at most $n-1$.
We refer the reader to \cite{FM:CTC} for the definition of commutators of congruences and to \cite{Mo:HCT} for supernilpotence and higher commutators in this general setting.
It then follows that the analogue of Theorem~\ref{thm1} holds for rings, $K$-algebras, Lie algebras, and loops.
\end{rem}

\begin{cor} \label{cor1}
Let $S\leqsd G_1\times \dots\times G_n$ be a subdirect product of perfect groups $G_1,\dots,G_n$,
and let $N\unlhd S$ be a normal subgroup of $S$.
If $S/N$ is solvable, then $S/N$ is nilpotent of class at most $n-1$.
\end{cor}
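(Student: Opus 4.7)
The plan is to reduce Corollary 1.2 to Theorem 1.1 by exhibiting a subdirect normal subgroup of $S$ contained in $N$. The theorem requires both $S$ and the normal subgroup to be subdirect in $G_1\times\dots\times G_n$; $S$ is given to be subdirect, but $N$ itself need not be. However, using that the $G_i$ are perfect, a suitable term of the derived series of $S$ will do the job.

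First I would observe that the derived subgroup $[S,S]$ is again subdirect in $G_1\times\dots\times G_n$. Indeed, the projection $\pi_i\colon S\to G_i$ is surjective, so its image on $[S,S]$ contains every commutator $[\pi_i(s),\pi_i(t)]$ with $s,t\in S$, and these generate $[G_i,G_i]=G_i$ since $G_i$ is perfect. An easy induction on $k$ then shows that every term $S^{(k)}$ of the derived series of $S$ is subdirect in $G_1\times\dots\times G_n$.

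Next, since $S/N$ is solvable, its derived series terminates, which means $S^{(k)}\leq N$ for some $k\in\mathbb{N}$. Set $M:=S^{(k)}$. Then $M\unlhd S$, $M\leq N$, and $M$ is subdirect in $G_1\times\dots\times G_n$ by the previous paragraph. Applying Theorem~\ref{thm1} to the pair $M\unlhd S$ yields that $S/M$ is nilpotent of class at most $n-1$. Since $S/N$ is a quotient of $S/M$, it follows that $S/N$ is nilpotent of class at most $n-1$ as well.

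There is no genuine obstacle here: the only point that requires any verification is the stability of subdirectness under the derived operation, and this is immediate from perfectness of the factors. Everything else is a direct appeal to Theorem~\ref{thm1}.
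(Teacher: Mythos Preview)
Your argument is correct, but it takes a slightly longer detour than the paper's. You assume that $N$ itself need not be subdirect and therefore pass to $M=S^{(k)}\leq N$; in fact the paper's proof shows that $N$ \emph{is} already subdirect. Namely, if $N_i:=\pi_i(N)$, then $N_i\unlhd G_i$ (since $N\unlhd S$ and $\pi_i$ is onto), and $G_i/N_i$ is a homomorphic image of $S/N$, hence solvable; perfectness of $G_i$ then forces $N_i=G_i$. Theorem~\ref{thm1} then applies directly to the pair $N\unlhd S$ with no need to descend to a smaller subgroup or pass back up via a quotient. Your route and the paper's both hinge on the same observation---that perfect groups have no nontrivial solvable quotients---but you apply it to $S^{(k)}$ while the paper applies it to $N$ itself, which is more direct.
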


\begin{proof}
 For $i=1,\dots,n$, let $N_i$ denote the projection of $N$ on $G_i$. 
 Since each $G_i$ is perfect and $G_i/N_i$ is solvable by assumption, we obtain $G_i = N_i$ and the result
 follows from Theorem~\ref{thm1}.
\end{proof}

\section{A subdirect construction}
\label{sec:constr}

By Corollary~\ref{cor1}, every solvable quotient of the subdirect product of two perfect groups is abelian.
In what follows, we are going to show that in fact there exist subdirect powers of a perfect group $G$ with
quotients of arbitrarily large nilpotency class.
To this end we introduce a construction that is based on higher commutator relations from universal algebra~\cite{Mo:HCT}.

Let $G$ be a group, and let $d\in\N$. We will be working in the direct product of $2^d$ copies of $G$
with components indexed by the power set $\P:=\P([d])$.
The set $\P$ will be linearly ordered by the short-lex ordering;
specifically
\[
A\lelex B \text{ iff } |A|<|B| \text{ or } \bigl(|A|=|B| \text{ and }
\min(A\setminus B)<\min(B\setminus A)\bigr).
\]

Now, for any $A\subseteq [d]$,  $u\in G$ and $K\subseteq G$, define $\Delta_A(u)\in G^\P$ by
\[
\Delta_A(u)(B):=\left\{ \begin{array}{ll} u & \text{if } A\subseteq B\\
                                                           1 & \text{otherwise.}
                                  \end{array}\right.
\]
and
\[
\Delta_A(K):=\bigl\{ \Delta_A(u)\st u\in K\bigr\}.
\]

The following is an immediate consequence of the above definition:

\begin{lem}
\label{lem:DC}
If $K,L\unlhd G$ and $A,B\subseteq [d]$, then
\[
\bigl[\Delta_A(K),\Delta_B(L)\bigr]=\Delta_{A\cup B} \bigl([K,L]\bigr).
\]
\end{lem}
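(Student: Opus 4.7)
The plan is to verify the identity first at the level of single commutators and then pass to the subgroups they generate. The key preliminary observation is that for every $A\subseteq [d]$ the map $\Delta_A\colon G\to G^\P$ is a group homomorphism. Indeed, multiplication in $G^\P$ is componentwise, and at each index $C\subseteq [d]$ both $\Delta_A(u)\Delta_A(v)$ and $\Delta_A(uv)$ evaluate to $uv$ when $A\subseteq C$ and to $1$ otherwise. In particular $\Delta_A(K)$ is (isomorphic to) $K$ sitting inside $G^\P$.

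Next I would compute $[\Delta_A(u),\Delta_B(v)]$ for arbitrary $u\in K$ and $v\in L$. Evaluating coordinate-wise at $C\subseteq [d]$, both $\Delta_A(u)(C)$ and $\Delta_B(v)(C)$ are nontrivial precisely when $A\subseteq C$ and $B\subseteq C$, i.e.\ when $A\cup B\subseteq C$; in that case the coordinate value of the commutator is $[u,v]$, and otherwise at least one of the two factors is $1$ and the commutator there is $1$. Hence $[\Delta_A(u),\Delta_B(v)] = \Delta_{A\cup B}([u,v])$.

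For the subgroup equality I would then invoke the definition that $[\Delta_A(K),\Delta_B(L)]$ is the subgroup of $G^\P$ generated by the set $\{[\Delta_A(u),\Delta_B(v)] : u\in K,\ v\in L\}$. By the previous step this generating set equals $\{\Delta_{A\cup B}([u,v]) : u\in K,\ v\in L\}$, which is the image under the homomorphism $\Delta_{A\cup B}$ of the standard generating set of $[K,L]$. Since $\Delta_{A\cup B}$ is a homomorphism, the subgroup generated by these images is exactly $\Delta_{A\cup B}([K,L])$, and the lemma follows.

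I do not anticipate a genuine obstacle. The one point that requires a moment's care is the distinction between single commutators and the subgroup they generate; this is handled entirely by the homomorphism property of $\Delta_{A\cup B}$, which commutes with the operation of passing to the generated subgroup. Normality of $K$ and $L$ in $G$ plays no role beyond giving $[K,L]$ its usual meaning as a subgroup of $G$.
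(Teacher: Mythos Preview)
Your proof is correct and is precisely a spelled-out version of what the paper dismisses as ``an immediate consequence of the above definition''; the coordinate-wise computation of $[\Delta_A(u),\Delta_B(v)]$ and the passage to generated subgroups via the homomorphism property of $\Delta_{A\cup B}$ are exactly the checks one needs. Your closing observation that normality of $K$ and $L$ is not actually used in the argument is also accurate.
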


Now, for a normal subgroup $N\unlhd G$ and for $k\in [d+1]$ we define the following subset of $G^\P$:
\begin{equation}
\label{eq:S}
\Gamma_k:=\Gamma_k(G,N):= 
\prod_{|A|<k} \Delta_A\bigl( [G,\kN{|A|}{N}]\bigr)\cdot
\prod_{|A|\geq k} \Delta_A\bigl( \gamma_{|A|}(N)\bigr).
\end{equation}
 Here the factors in the products are ordered by short-lex on $\P$.
  
\begin{lem} \label{lem:KL}
 Let $A,B\subseteq [d]$. Then
\begin{enumerate}
\item $\bigl[ [G,\kN{|A|}{N}],[G,\kN{|B|}{N}] \bigr] \leq [G,\kN{|A\cup B|}{N}]$, 
\item $\bigl[ [G,\kN{|A|}{N}], \gamma_{|B|}(N) \bigr] \leq [G,\kN{|A\cup B|}{N}]$ for $B\neq\emptyset$ and
\item $\bigl[ \gamma_{|A|}(N), \gamma_{|B|}(N) \bigr] \leq \gamma_{|A\cup B|}(N)$ for $A,B\neq\emptyset$.
\end{enumerate}
\end{lem}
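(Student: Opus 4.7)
The plan is to prove the three parts in reverse order of subtlety, pairing each hypothesis with one of the commutator facts \ref{it:C1} or \ref{it:C2} and then descending along the two chains $\gamma_k(N)$ and $[G,\kN{k}{N}]$, both of which are monotone decreasing in $k$.

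For part (iii), \ref{it:C1} applied to $\gamma_{|A|}(N)$ and $\gamma_{|B|}(N)$ gives a subgroup of $\gamma_{|A|+|B|}(N)$; since $|A|+|B|\geq|A\cup B|$ and the lower central series of $N$ is descending, this is contained in $\gamma_{|A\cup B|}(N)$. For part (ii), applying \ref{it:C2} with $m=|A|$ and $l=|B|\geq 1$ (allowed since $B\neq\emptyset$) yields a subgroup of $[G,\kN{|A|+|B|}{N}]$. To descend further to $[G,\kN{|A\cup B|}{N}]$ I would use that the chain $[G,\kN{k}{N}]$ is decreasing in $k$, which follows from a short induction showing $[G,\kN{k}{N}]\unlhd G$: then $[G,\kN{k+1}{N}]=[[G,\kN{k}{N}],N]$ is absorbed into $[G,\kN{k}{N}]$ by normality.

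The real work is in part (i), where the second entry is $[G,\kN{|B|}{N}]$ rather than $\gamma_{|B|}(N)$. The key auxiliary observation I would isolate is
\[
  [G,\kN{k}{N}]\leq\gamma_k(N) \text{ for every } k\geq 1,
\]
proved by induction on $k$: the base case $[G,N]\leq N=\gamma_1(N)$ holds since each generator $[g,n]=(g^{-1}n^{-1}g)n$ lies in $N$ by normality, and the inductive step is $[G,\kN{k}{N}]=[[G,\kN{k-1}{N}],N]\leq[\gamma_{k-1}(N),N]=\gamma_k(N)$. With this in hand, when $|A|,|B|\geq 1$ I would replace $[G,\kN{|B|}{N}]$ by the larger $\gamma_{|B|}(N)$ and invoke part (ii). The boundary cases $|A|=0$ or $|B|=0$ reduce immediately to the normality of $[G,\kN{k}{N}]$ in $G$: for instance if $|A|=0$ then $|A\cup B|=|B|$ and $\bigl[G,[G,\kN{|B|}{N}]\bigr]\leq[G,\kN{|B|}{N}]$ by normality, which is exactly what is required.

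I expect the only genuine obstacle to be isolating and proving the auxiliary containment $[G,\kN{k}{N}]\leq\gamma_k(N)$; this is the bridge that converts an outer commutator built from both $G$ and $N$ into a term of the lower central series of $N$ alone, and without it there seems to be no direct route from \ref{it:C2} to part (i). Once this containment and the normality/monotonicity remarks above are in place, all three parts reduce to routine applications of \ref{it:C1} and \ref{it:C2} together with the trivial inequality $|A|+|B|\geq|A\cup B|$.
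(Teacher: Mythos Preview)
Your proposal is correct and essentially follows the route the paper has in mind: the paper's proof is the single line ``Immediate from~\ref{it:C1} and~\ref{it:C2}'', relying on exactly the monotonicity of the two chains and the containment $[G,\kN{k}{N}]\leq\gamma_k(N)$ for $k\geq 1$ that you isolate (the paper itself invokes this containment explicitly later, in the proof of Lemma~\ref{lem:lcc}). You have simply made explicit the details the paper leaves to the reader, and your handling of the boundary cases $|A|=0$ or $|B|=0$ via normality is the right patch.
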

   
\begin{proof}
 Immediate from~\ref{it:C1} and~\ref{it:C2}.
\end{proof}

\begin{lem}
\label{lem:subgroup}
 $\Gamma_k(G,N)$ is a subdirect subgroup of $G^\P$ for each $k\in [d+1]$ and $N\unlhd G$.
\end{lem}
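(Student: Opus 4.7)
The plan is to verify the two required properties separately: subdirectness is immediate from the construction, whereas the subgroup property calls for an inductive argument based on commutator calculus.

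For subdirectness, note that $\emptyset$ is the smallest element of $\P$ in short-lex order, and the corresponding factor $\Delta_\emptyset([G, \kN{0}{N}]) = \Delta_\emptyset(G)$ appears in $\Gamma_k$ since $|\emptyset| = 0 < k$. The $B$-th coordinate of $\Delta_\emptyset(G)$ equals $G$ for every $B \in \P$, so the projection of $\Gamma_k$ onto each coordinate is already surjective.

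For the subgroup property, the key input is the uniform commutator inclusion
\[
  [M_A, M_B] \leq M_{A \cup B}
\]
for all $A, B \subseteq [d]$, where $M_A = [G,\kN{|A|}{N}]$ if $|A|<k$ and $M_A = \gamma_{|A|}(N)$ otherwise. This follows from Lemma~\ref{lem:KL}, whose three parts cover the cases according to whether $|A|$ and $|B|$ lie below or above $k$; when the union $A \cup B$ crosses the threshold $k$, the auxiliary inclusion $[G,\kN{m}{N}] \leq \gamma_m(N)$ for $m \geq 1$ (an easy induction from $[G,N] \leq N$) is also needed to convert the $[G,\kN{|A\cup B|}{N}]$-bounds of Lemma~\ref{lem:KL} into $\gamma_{|A\cup B|}(N)$-bounds. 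Combined with Lemma~\ref{lem:DC} this upgrades to $[\Delta_A(M_A), \Delta_B(M_B)] \leq \Delta_{A \cup B}(M_{A \cup B})$.

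With this in hand, I would prove by decreasing induction on $j \in \{0, 1, \dots, d+1\}$ that each partial product $F_j := \prod_{|A| \geq j} \Delta_A(M_A)$ (in short-lex order) is a subgroup of $G^\P$; the case $j = 0$ then gives $\Gamma_k$. The base $F_{d+1} = \{1\}$ is trivial. For the inductive step, since $|A \cup B| \geq j+1$ whenever $|A| = j$ and $|B| \geq j+1$ (and also whenever $|A|=|B|=j$, $A\neq B$), the commutator inclusion shows both that $F_{j+1}$ is normalized by each $\Delta_A(M_A)$ with $|A| = j$, and that distinct $\Delta_A(M_A), \Delta_B(M_B)$ with $|A| = |B| = j$ commute modulo $F_{j+1}$. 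Consequently, if $H \leq G^\P$ denotes the subgroup generated by $F_{j+1}$ together with all $\Delta_A(M_A)$ for $|A| = j$, then $F_{j+1}$ is normal in $H$ and $H/F_{j+1}$ is abelian, so $H$ coincides as a set with the short-lex product $F_j$. The main obstacle is precisely this inductive step: verifying that every commutator produced by rearranging adjacent factors at level $j$ falls into $F_{j+1}$ rather than polluting the level-$j$ part of the product, which is exactly what the uniform commutator inclusion above guarantees.
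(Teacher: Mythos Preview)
Your proposal is correct and rests on the same key ingredient as the paper's proof: the commutator inclusion $[M_A,M_B]\leq M_{A\cup B}$ obtained from Lemmas~\ref{lem:DC} and~\ref{lem:KL} (together with $[G,\kN{m}{N}]\leq\gamma_m(N)$ when the union crosses the threshold $k$). The only difference is organizational: the paper argues directly that swapping two out-of-order factors $\Delta_B(L)\cdot\Delta_A(K)$ produces $\Delta_A(K)\cdot\Delta_B(L)\cdot\Delta_{A\cup B}([K,L])$ with the extra factor absorbed into a short-lex--later factor, whereas you package the same collecting process as a downward induction on the size level $j$.
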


\begin{proof}
First notice that all factors in the definition of $\Gamma_k$ are subgroups. That $\Gamma_k$ projects onto each coordinate follows from the presence of 
$\Delta_\emptyset\bigl([G,\kN{0}{N}]\bigr)=\Delta_\emptyset(G)$, the diagonal of $G^\P$.

To prove that $\Gamma_k$ is a subgroup, consider two generic factors
$\Delta_A(K)$ and $\Delta_B(L)$ in its definition for $A\lelex B$.
That is, $K=[G,\kN{|A|}{N}]$ if $|A|<k$ and $K=\gamma_{|A|}(N)$ otherwise, and likewise for 
$L$. 
 By Lemma \ref{lem:DC}, we obtain
\begin{eqnarray*}
\Delta_B(L)\cdot\Delta_A(K)&=&\Delta_A(K)\cdot\Delta_B(L)\cdot\bigl[\Delta_B(L),\Delta_A(K)\bigr]\\
&=&\Delta_A(K)\cdot\Delta_B(L)\cdot\Delta_{A\cup B}\bigl( [K,L]\bigr).
\end{eqnarray*}
By Lemma~\ref{lem:KL}, we have $[K,L] \leq [G,\kN{|A\cup B|}{N}]$ if $|A\cup B|<k$; further
$[K,L] \leq \gamma_{|A\cup B|}(N)$ if $|A\cup B|\geq k$. Hence $\Delta_{A\cup B}\bigl( [K,L]\bigr)$
 is contained in another factor of the product defining $\Gamma_k$. Since $B\leqlex A\cup B$, the result
 follows.
\end{proof}

\begin{rem}
 In reference to Holt's motivation, we note that the automorphism group of $S:=\Gamma_1$ acts transitively
 on the index set $\P$.
 More precisely, for $i\in [d]$, the permutation on $\P$ that
 maps a subset $A$ of $[d]$ to its symmetric difference with $\{i\}$ induces an automorphism $r_i$ on $S$.
 This can be easily seen by the action of $r_i$ on the generators $\Delta_A(u)$ of $S$:
 $r_i(\Delta_A(u)) = \Delta_A(u)$ if $i\not\in A$; else
 $r_i(\Delta_A(u)) = \Delta_{A\setminus\{i\}}(u)\cdot\Delta_A(u)^{-1}$.
 Then $\langle r_1,\dots, r_d\rangle \leq \Aut(S)$ is isomorphic to $\Z_2^d$ and acts regularly on the set of
 direct factors of $S$. 
\end{rem}

\section{The lower central series}
\label{sec:lcs}

Continuing with the construction from the previous section, we now let $S:=\Gamma_1$ and determine its
lower central series:

\begin{lem}
\label{lem:lcc}
If $G'=G$, then $\gamma_k(S)=\Gamma_k$ for all $k\in [d+1]$.
\end{lem}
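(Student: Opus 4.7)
The plan is to argue by induction on $k$. For $k=1$, the equality $\gamma_1(S) = S = \Gamma_1$ holds by the very definition of $S$. For the inductive step, I would assume $\gamma_k(S) = \Gamma_k$ for some $k \leq d$; then $\gamma_{k+1}(S) = [\Gamma_k, \Gamma_1]$, and I would prove the two inclusions $[\Gamma_k, \Gamma_1] \leq \Gamma_{k+1}$ and $\Gamma_{k+1} \leq [\Gamma_k, \Gamma_1]$ separately.

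For $[\Gamma_k, \Gamma_1] \leq \Gamma_{k+1}$, Lemma~\ref{lem:DC} reduces matters to showing that for all $A, B \subseteq [d]$ each commutator $\Delta_{A \cup B}([u, v])$ lies in $\Gamma_{k+1}$, where $u$ is drawn from the $A$-factor of $\Gamma_k$ and $v$ from the $B$-factor of $\Gamma_1$. I would split into cases according to whether $B = \emptyset$, whether $|A| < k$ or $|A| \geq k$, and where $|A \cup B|$ sits relative to $k+1$. In each case, a short computation using Lemma~\ref{lem:KL}, the monotonicity of $m \mapsto [G, \kN{m}{N}]$ and $m \mapsto \gamma_m(N)$, the inclusion $[G, \kN{m}{N}] \leq \gamma_m(N)$ (an easy induction from $[G, N] \leq N$), and the normality of both families in $G$, shows that $[u, v]$ lies in the correct factor of $\Gamma_{k+1}$.

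For $\Gamma_{k+1} \leq [\Gamma_k, \Gamma_1]$, I would prove each $\Delta_A$-factor of $\Gamma_{k+1}$ sits inside $[\Gamma_k, \Gamma_1]$. The diagonal factor is produced using $G' = G$:
\[
  \Delta_\emptyset(G) = \Delta_\emptyset([G, G]) = [\Delta_\emptyset(G), \Delta_\emptyset(G)] \leq [\Gamma_k, \Gamma_1],
\]
which is the sole place where perfectness of $G$ enters the proof. For $A$ with $|A| = m \geq 1$, I would pick any $a \in A$ and, given a generator $[h, n]$ of the relevant factor (namely $h \in [G, \kN{m-1}{N}]$, $n \in N$ when $m \leq k$, or $h \in \gamma_{m-1}(N)$, $n \in N$ when $m \geq k+1$), apply Lemma~\ref{lem:DC} to write
\[
  \Delta_A([h, n]) = [\Delta_{A \setminus \{a\}}(h), \Delta_{\{a\}}(n)].
\]
Here $\Delta_{\{a\}}(n)$ always belongs to $\Gamma_1$, and $|A \setminus \{a\}| = m - 1$ falls into either the first product of $\Gamma_k$ (when $m - 1 < k$) or the second (when $m - 1 \geq k$), placing $\Delta_{A \setminus \{a\}}(h)$ in $\Gamma_k$ in either regime.

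The main obstacle is essentially bookkeeping, namely keeping straight the case distinctions in the forward direction. The one boundary sub-case deserving brief care is when $|A| = k$ (so we are in the second product of $\Gamma_k$) but $|A \cup B| = k$ as well (forcing $B \subseteq A$); here one needs $[\gamma_k(N), \gamma_{|B|}(N)] \leq [G, \kN{k}{N}]$, which follows from~\ref{it:C2} combined with $\gamma_{|B|}(N) \leq N \leq G$.
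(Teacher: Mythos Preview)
Your proposal is correct and follows essentially the same approach as the paper: the same induction reducing to $[\Gamma_k,\Gamma_1]=\Gamma_{k+1}$, the same factor-by-factor commutator analysis via Lemma~\ref{lem:DC} and Lemma~\ref{lem:KL} for the inclusion $\leq$ (including the same boundary case $|A|=k$, $B\subseteq A$ handled via~\ref{it:C2}), and the same splitting $A=(A\setminus\{a\})\cup\{a\}$ together with $G'=G$ for the inclusion $\geq$.
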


\begin{proof}
 We need to show that
\begin{equation}
\label{eq:Gamma}
[\Gamma_k,\Gamma_1]=\Gamma_{k+1}\ \text{ for } k\in[d].
\end{equation}

For the inclusion $\leq$ it is sufficient to prove that
\begin{equation}
\label{eq:DAX}
\bigl[ \Delta_A(K),\Delta_B(L)\bigr] = \Delta_{A\cup B}\bigl([K,L]\bigr)\leq \Gamma_{k+1},
\end{equation}
where
\[
 K=\left\{ \begin{array}{ll} [G, \kN{|A|}{N}] & \text{if } |A|<k, \\ \gamma_{|A|}(N) & \text{otherwise}, \end{array}\right.\ \ L=\left\{ \begin{array}{ll} G & \text{if } B=\emptyset, \\ \gamma_{|B|}(N) & \text{otherwise}. \end{array}\right.
\]
 Note that~\eqref{eq:DAX} follows from the claim that
\begin{equation}
\label{eq:KL}
[K,L]\leq \begin{cases} [G, \kN{|A\cup B|}{N}] & \text{if } |A\cup B|<k+1, \\
    \gamma_{|A\cup B|}(N) & \text{otherwise}.
 \end{cases}
\end{equation}
 The assertion for $|A\cup B| < k+1$ follows from Lemma~\ref{lem:KL} except when $|A|=k$ and $B\subseteq A$.
 In this case we use~\ref{it:C2} to obtain
\[ [K,L] = [\gamma_{|A|}(N),L] \leq [G, \gamma_{|A|}(N)] \leq [G, \kN{|A|}{N}] = [G, \kN{|A\cup B|}{N}]. \]
Claim~\eqref{eq:KL} for $|A\cup B|\geq k+1$ is also immediate from Lemma~\ref{lem:KL} and the observation that
 $[G, \kN{|A\cup B|}{N}] \leq \gamma_{|A\cup B|}(N)$ for $A\cup B\neq\emptyset$.
 Hence~\eqref{eq:KL},~\eqref{eq:DAX}, and the inclusion $\subseteq$ of~\eqref{eq:Gamma} are proved.

For the inclusion $\supseteq$ of~\eqref{eq:Gamma} we show that
\begin{equation}
\label{eq:DAX2}
\Delta_A(K)\subseteq [\Gamma_k,\Gamma_1],
\end{equation}
where
\[
K=\left\{ \begin{array}{ll} [G,\kN{|A|}{N}] & \text{if } |A|<k+1 \\
                                        \gamma_{|A|}(N) & \text{if } |A|\geq k+1.
               \end{array}\right.
\]
Now, if $A=\emptyset$, the assumption that $G=G'$ yields
\[
\Delta_\emptyset(K)=\Delta_\emptyset(G)=\Delta_\emptyset\bigl([G,G]\bigr)
=\bigl[ \Delta_\emptyset(G),\Delta_\emptyset(G)\bigr]\subseteq [\Gamma_k,\Gamma_1].
\]
Assume now that $A\neq\emptyset$, and write it as $A=B\cup C$, where $|B|=|A|-1$ and $|C|=1$.
If $|A|<k+1$, then $|B|<k$ and by Lemma~\ref{lem:DC}
\[
\Delta_A(K)=\Delta_{B\cup C}\bigl( [G,\kN{|B|+| C|}{N}]\bigr)=
\Bigl[ \Delta_B\bigl( [G,\kN{|B|}{N}]\bigr),\Delta_C\bigl(N\bigr)\Bigr]
\subseteq [\Gamma_k,\Gamma_1].
\]
Likewise, if $|A|\geq k+1$, then $|B|\geq k$ and
\[
\Delta_A(K)=\Delta_{B\cup C}\bigl( \gamma_{|B|+| C|}(N)\bigr)
=\Bigl[ \Delta_B\bigl( \gamma_{|B|}(N)\bigr),\Delta_C\bigl(N\bigr)\Bigr]
\subseteq [\Gamma_k,\Gamma_1].
\]
This completes the proof of \eqref{eq:DAX2}, the inclusion $\supseteq$ of~\eqref{eq:Gamma}
 and the lemma.
\end{proof}

We can now prove our main result:

\begin{thm}
\label{thm2}
Let $G$ be a perfect group, $d\in\N$, and $N$ a normal subgroup such that $\gamma_d(N)>[G,\kN{d}{N}]$.
Then $S=\Gamma_1(G,N)$ is a subdirect subgroup of $G^{2^d}$ which has a nilpotent quotient of class $d$.
\end{thm}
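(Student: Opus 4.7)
The plan is to take the quotient $S/\Gamma_{d+1}$ and show it has nilpotency class exactly $d$. Lemma~\ref{lem:subgroup} with $k=1$ gives that $S = \Gamma_1(G,N)$ is a subdirect subgroup of $G^{\P}=G^{2^d}$, and since $G$ is perfect, Lemma~\ref{lem:lcc} yields $\gamma_k(S)=\Gamma_k$ for every $k\in[d+1]$. In particular $\gamma_{d+1}(S)=\Gamma_{d+1}$, so $S/\Gamma_{d+1}$ is nilpotent of class at most $d$.

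To see that the class is exactly $d$, it suffices to exhibit an element of $\Gamma_d\setminus\Gamma_{d+1}$. The hypothesis provides some $u\in\gamma_d(N)\setminus[G,\kN{d}{N}]$, and since $|[d]|=d$ the element $\Delta_{[d]}(u)$ lies in $\Delta_{[d]}(\gamma_d(N))$, which is one of the factors of $\Gamma_d$. So the remaining task is to verify that $\Delta_{[d]}(u)\notin\Gamma_{d+1}$.

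For this, note that every $A\subseteq[d]$ satisfies $|A|\leq d$, so~\eqref{eq:S} specializes to
\[
\Gamma_{d+1} = \prod_{A\subseteq[d]} \Delta_A\bigl([G,\kN{|A|}{N}]\bigr),
\]
with the factors in short-lex order. Suppose for contradiction that $\Delta_{[d]}(u)=\prod_A\Delta_A(v_A)$ for some $v_A\in[G,\kN{|A|}{N}]$. Since short-lex refines strict inclusion on $\P$, the $B$-coordinate of this product equals the short-lex ordered product $\prod_{A\subseteq B}v_A$, with $v_B$ appearing last. I would then induct on $B\in\P$ in short-lex order: the inductive vanishing of $v_A$ for all $A\lelex B$ collapses the $B$-coordinate equation to $v_B=\Delta_{[d]}(u)(B)$, forcing $v_B=1$ whenever $B\neq[d]$. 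At the final step $B=[d]$ this equation becomes $v_{[d]}=u$; but $v_{[d]}$ must lie in $[G,\kN{d}{N}]$ by the definition of $\Gamma_{d+1}$, contradicting the choice of $u$.

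The main obstacle is this last normal-form argument --- extracting the factors $v_A$ from a product in a nonabelian group. The key technical point making it succeed is that short-lex refines subset-inclusion on $\P$, so at each coordinate $B$ the term $v_B$ sits last in the product and is cleanly isolated by the inductive vanishing of earlier $v_A$'s; with $v_{[d]}$ thereby forced into $[G,\kN{d}{N}]$, the hypothesis $\gamma_d(N)>[G,\kN{d}{N}]$ directly supplies the required class-$d$ witness.
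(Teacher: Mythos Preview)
Your proof is correct and follows essentially the same route as the paper. The paper phrases the final step in terms of kernels of the projection onto $\P\setminus\{[d]\}$: it notes that $\Gamma_d$ and $\Gamma_{d+1}$ agree in all factors except the one indexed by $[d]$, so their kernels under that projection are $\Delta_{[d]}(\gamma_d(N))$ and $\Delta_{[d]}([G,\kN{d}{N}])$ respectively, and the hypothesis separates them. Your short-lex induction is exactly the argument needed to justify that kernel computation --- it shows any element of $\Gamma_{d+1}$ trivial on $\P\setminus\{[d]\}$ has $v_A=1$ for $A\neq[d]$ --- so your version is a more explicit rendering of the same idea rather than a different approach.
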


\begin{proof}
 That $S$ is subdirect was proved in Lemma \ref{lem:subgroup}.
By Lemma \ref{lem:lcc} and \eqref{eq:S}, we have 
\begin{align*}
 &\gamma_d(S) = \Gamma_d=\prod_{|A|<d} \Delta_A\bigl( [G,\kN{|A|}{N}]\bigr)\cdot \Delta_{[d]}\bigl( \gamma_{d}(N)\bigr), \\
 &\gamma_{d+1}(S) = \Gamma_{d+1}=\prod_{A\in\P} \Delta_A\bigl( [G,\kN{|A|}{N}]\bigr).
\end{align*}
Hence the kernels of the projections  on $\P\setminus \{[d]\}$ of $\gamma_d(S)$ and $\gamma_{d+1}(S)$, respectively, are
 $\Delta_{[d]}\bigl( \gamma_{d}(N)\bigr)$ and $ \Delta_{[d]}\bigl( [G,\kN{d}{N}]\bigr)$, respectively.
 Since the assumption $\gamma_d(N)>[G,\kN{d}{N}]$ yields
 $$\Delta_{[d]}\bigl( \gamma_{d}(N)\bigr) > \Delta_{[d]}\bigl( [G,\kN{d}{N}]\bigr),$$
 we obtain $\gamma_d(S)>\gamma_{d+1}(S)$. Thus $S/\gamma_{d+1}(S)$ is indeed $d$-nilpotent but not ($d-1$)-nilpotent.
\end{proof}

\begin{rem}
 The 2018 version of the GAP library `Perfect Groups' by Felsch, Holt and Plesken \cite{GAP4,GAPPerfect} provides a list of all perfect groups whose sizes are less than $10^6$
 excluding $11$ sizes.
Unfortunately none of the groups $G$ in this library have a normal subgroup $N$
 with $\gamma_d(N)>[G,\kN{d}{N}]$ for $d>1$. Hence Theorem~\ref{thm2} does not yield non-abelian nilpotent quotients
 of $\Gamma_1(G,N)$ for them.
\end{rem}

\section{An infinite example}
\label{sec:infG}

In this section we demonstrate the existence of a perfect group $G$ satisfying the assumptions of
Theorem~\ref{thm2}. More precisely, we will construct an infinite group $G$ with a normal subgroup $N$ such
that
\begin{equation}
\label{eq:ex1}
\gamma_d(N)>[G,\kN{d}{N}] \text{ for all } d\in\N.
\end{equation}

Let $X$ be a countably infinite alphabet with elements
\[
X:=\bigl\{ x_{i,w}\st i\in\N,\ w\in\{0,1\}^\ast\bigr\}
\cup \bigl\{ y_i\st i\in\N\bigr\}.
\]
 Here $\{0,1\}^\ast$ denotes the set of all words over $\{0,1\}$ including the empty word $\epsilon$.
Let $F$ be the free group on $X$.
The derived subgroup $F^\prime$ is free by Nielsen's Theorem, and it is routine to see
that it has a free basis $Z$ which contains all the commutators $[u,v]$ with $u,v\in X$, $u\neq v$.
For $i\in\N$, $w\in\{0,1\}^\ast$ set
\[
h(x_{i,w}):=[x_{i,w0},x_{i,w1}]\in Z,
\]
and extend $h$ to a bijection $h:X\rightarrow Z$, and then to an isomorphism $h:F\rightarrow F^\prime$.

Now let $F_i:=F$ for $i\in\N$, and let $G$ be the limit of the directed system
\[
F_1 \xrightarrow{h} F_2 \xrightarrow{h} F_3\xrightarrow{h}\dots
\]
Thus, there exist embeddings $h_i : F_i\rightarrow G$ ($i\in\N$) such that 
$h_i=h_{i+1}h$, $h_1(F_1) \leq h_2(F_2) \leq\dots$ and $G=\bigcup_{i\in\N} h_i(F_i)$.

The group $G$ is perfect, since $h_i(F_i)=h_{i+1}(F_{i+1})^\prime$.
Also, from $h_1(F_1)=h_i(F_i)^{(i-1)}$, it follows that $N:=h_1(F_1)\unlhd G$.
We claim that
\eqref{eq:ex1} holds for this $G$ and $N$.

Suppose to the contrary that $\gamma_d(N)=[G,\kN{d}{N}]$ for some $d$.
Consider the element
\[
u:= [x_{1,\epsilon},x_{2,\epsilon},\dots,x_{d,\epsilon}]\in \gamma_d(F_1).
\]
Then $h_1(u)\in\gamma_d(N)$ and hence $h_1(u)\in [G,\kN{d}{N}]$.
It follows that
\[
h_1(u)=v_1v_2\dots v_k,
\]
where
\[
v_i=c_i^{-1}[g_i,n_{i1},\dots,n_{id}]^{e_i}c_i
\]
for $e_i\in\{-1,1\}$, $c_i,g_i\in G$, $n_{ij}\in N$ for all $i\leq k, j\leq d$.
From $G=\bigcup_{t\in\N}h_t(F_t)$ it follows that there exists $t\in\N$ such that
$c_i,g_i\in h_t(F_t)$ for all $i=1,\dots,k$.
Therefore we have
\[
h_1(u)\in \bigl[h_t(F_t),\kN{d}{h_1(F_1)}\bigr].
\]
Since $h_1=h_th^{t-1}$ and $h_t:F_t\rightarrow G$ is an injection, this implies
\[
h^{t-1}(u)\in \bigl[ F_t,\kN{d}{h^{t-1}(F_1)}\bigr].
\]
Recall $F_1=F_t=F$ and $h^{t-1}(F)=F^{(t-1)}$ to obtain
\begin{equation} \label{eq:ht}
h^{t-1}(u)\in [F,\kN{d}{F^{(t-1)}}].
\end{equation}
Note that
\[
h^{t-1}(x_{i,\epsilon})=h^{t-2}\bigl( [x_{i,0},x_{i,1}]\bigr)=
h^{t-3}\Bigl( \bigl[ [ x_{i,00},x_{i,01}],[x_{i,10},x_{i,11}]\bigr]\Bigr)=\dots
\]
generates $F^{(t-1)}$ as a fully invariant subgroup of $F$. Hence
\[
h^{t-1}(u)=\bigl[ h^{t-1}(x_{1,\epsilon}),\dots, h^{t-1}(x_{d,\epsilon})\bigr]
\]
 generates $\gamma_d(F^{(t-1)})$ as a fully invariant subgroup.
 Since $[F,\kN{d}{F^{(t-1)}}]$ is also fully invariant,~\eqref{eq:ht} implies the inclusion $\leq$ in
\[
\gamma_d(F^{(t-1)})= [F,\kN{d}{F^{(t-1)}}].
\]
 The converse inclusion $\geq$ is trivial.
Since $F$ is free of countable rank, this implies that
\[
\gamma_d(H^{(t-1)})=[H,\kN{d}{H^{(t-1)}}]
\]
for \emph{all} countable (not necessarily perfect) groups $H$.
But this is false, one counter-example being the group of upper unitriangular $n\times n$ matrices over any field for sufficiently large $n$.
This contradiction establishes Claim \eqref{eq:ex1}.

\section{A finite example}
\label{sec:finG1}

In this section we exhibit a finite group $G$ with a normal subgroup $N$ such that
$[N,N]>[G,N,N]$.

Let $\F:=\F_4$ be the field of order $4$, and let $\alpha\in \F$ be a primitive element; recall that $\alpha^2=\alpha+1$.
Recall that the special linear group $S:=\SL{2}{\F}$ is perfect~\cite[3.2.8]{robinson};
in fact, 
$S$ is isomorphic to the alternating group $A_5$.
 Let us denote by $\I$ the identity of $S$.
Also let
\[
T:=\{ A\in \F^{2\times 2}\st \tr(A)=0\},
\] 
the set of all matrices of trace $0$.

In what follows we will work in the special linear group $\SL{6}{\F}$.
Its elements will be considered and written as $3\times 3$ block matrices over $\F^{2\times 2}$.
First define the following three subgroups of $\SL{6}{\F}$,
where the omitted entries are understood to be $0$-blocks:
\begin{gather*}
H:= \left\{ 
    \begin{pmatrix} A & & \\ &A& \\ &&A \end{pmatrix}
    \st A\in S\right\} ,
\\
M:= \left\{ 
    \begin{pmatrix} \I & B & D \\ &\I& C \\ &&\I \end{pmatrix}
    \st B,C,D\in\F^{2\times 2},\ \tr(B)=\tr(C)=0\right\} ,
\\
N:= \left\{ 
    \begin{pmatrix} \I & b\I & D \\ &\I& c\I \\ &&\I \end{pmatrix}
    \st b,c\in\F,\ D\in\F^{2\times 2}\right\} ,
\end{gather*}
and then set
\[
G:=HM.
\]

\begin{lem}
\label{lem:SL6}
The following hold for $G$, $N$ and $M$ as defined above:
\begin{enumerate}
\item   \label{it:GN1}
$M\unlhd G\leq \SL{6}{\F}$.
\item   \label{it:GN2}
$G$ is perfect.
\item   \label{it:GN3}
$N\unlhd G$.
\item   \label{it:GN4}
$[N,N]>[G,N,N]$.
\end{enumerate}
\end{lem}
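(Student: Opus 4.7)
The plan is to coordinatize $M$ by writing each element as a triple $(B,C,D) \in T \times T \times \F^{2\times 2}$ of its off-diagonal blocks. Block multiplication then yields
\[ (B_1,C_1,D_1)(B_2,C_2,D_2) = (B_1+B_2,\ C_1+C_2,\ D_1+D_2+B_1C_2), \]
which shows $M$ is a subgroup, $2$-step nilpotent with center $Z := \{(0,0,D) : D \in \F^{2\times 2}\}$, and with internal commutator $[m_1,m_2] = (0,0,B_1C_2+B_2C_1)$ in characteristic $2$. Conjugation by $h = \mathrm{diag}(A,A,A) \in H$ acts on triples by $(A^{-1}BA,A^{-1}CA,A^{-1}DA)$. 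Since trace is conjugation-invariant, $M$ is $H$-stable, proving (i). For (iii), $H$ normalizes $N$ because $A^{-1}(b\I)A = b\I$, and $n^m = n\cdot[n,m]$ has the same $(B,C)$-slots as $n$ because $[n,m] \in Z \le N$, so $n^m \in N$.

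For (ii), $H \le G'$ since $H \cong \SL{2}{\F_4} \cong A_5$ is perfect, so it remains to show $M \le G'$. First, $[M,M] = Z$: the commutator formula gives $[M,M] \le Z$, and conversely the products $BC$ with $B,C \in T$ include $E_{12}E_{21} = E_{11}$ and $E_{21}E_{12} = E_{22}$, spanning $\F^{2\times 2}$. Second, I would show the image of $[H,M]$ in $M/Z \cong T \oplus T$ is all of $T \oplus T$: that image is the additive subgroup generated by pairs $(B + A^{-1}BA,\ C + A^{-1}CA)$, and since $B,C$ vary independently it suffices to prove $\{B + A^{-1}BA : A \in \SL{2}{\F_4},\ B \in T\}$ spans $T$. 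Using $A = \mathrm{diag}(\alpha,\alpha^2)$ (of determinant $\alpha^3 = 1$) and appropriately scaled $B = \lambda E_{12}$ or $\lambda E_{21}$ yields $E_{12}$ and $E_{21}$, while $A = \bigl(\begin{smallmatrix}1&1\\0&1\end{smallmatrix}\bigr)$ with $B = E_{21}$ yields $\I + E_{12}$. These elements span $T = \F\I \oplus \F E_{12} \oplus \F E_{21}$. So $M$ is generated by $[H,M]$ and $Z$, both contained in $G'$, giving $M \le G'$.

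For (iv), the crux is $[G,N] \le Z$. For $m = (B,C,D') \in M$ and $n = (b\I,c\I,D) \in N$, the commutator formula gives $[m,n] = (0,0,cB+bC) \in Z$, so $[M,N] \le Z$; and the $(B,C)$-parts of $[h,n]$ vanish because $A^{-1}(b\I)A = b\I$, so $[H,N] \le Z$. Expanding $[hm,n] = [h,n]^m[m,n]$ with $[h,n] \in Z = Z(M)$ yields $[G,N] \le Z$. Since $Z$ commutes with $M \supseteq N$, we conclude $[G,N,N] = 1$. But $(\I,0,0)$ and $(0,\I,0)$ both lie in $N$ and their commutator is $(0,0,\I) \neq 1$, so $[N,N] \neq 1$; hence $[N,N] > [G,N,N]$.

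The main obstacle is the span argument in step (ii): the action of $H \cong \SL{2}{\F_4}$ on $T$ is not semisimple in characteristic $2$, because the central submodule $\F\I$ admits no invariant complement in general. Consequently it is not automatic that the coboundaries $B + A^{-1}BA$ cover the scalar direction of $T$, and the explicit unipotent computation is needed to handle that case.
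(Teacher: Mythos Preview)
Your argument is correct and follows essentially the same route as the paper, resting on the same two computations: that $\{B+B^A : A\in S,\ B\in T\}$ spans $T$ (the paper's Lemma~\ref{lemA}) and that products $BC$ with $B,C\in T$ span $\F^{2\times 2}$ (Lemma~\ref{lemB}). One small slip: in your justification of $[M,M]=Z$ you say that $E_{12}E_{21}=E_{11}$ and $E_{21}E_{12}=E_{22}$ span $\F^{2\times 2}$, but of course two matrices cannot; adding $\I\cdot E_{12}=E_{12}$ and $\I\cdot E_{21}=E_{21}$ (legitimate since $\I\in T$ in characteristic~$2$) completes the basis.

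Organizationally you diverge from the paper in two minor but pleasant ways. For \eqref{it:GN2} the paper proves the stronger statement $[H,M]=M$ outright, building the entire $Z$-part inside $[H,M]$ via the product trick in~\eqref{eq12a}; you instead show only that $[H,M]$ surjects onto $M/Z$ and invoke $Z=[M,M]\le G'$ separately, which sidesteps that computation. For \eqref{it:GN4} the paper computes $[G,N]$ exactly as $\{(0,0,P):P\in T\}$, whereas you observe only $[G,N]\le Z(M)$, which already forces $[G,N,N]=1$; this is cleaner and is all that is needed.
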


We note that $G$ has size $60\cdot 4^{10} = 62\, 914\, 560$ which exceeds the maximal size of perfect groups
 listed in the `Perfect Groups' GAP library~\cite{GAPPerfect}.

In the proof of the above results, we will make use of the following two technical observations.
Throughout what follows we will use the standard exponentiation notation for matrix conjugation: $B^A:=A^{-1}BA$.

\begin{lem}
\label{lemA}
The set $T$, considered as an $\F$-module, is spanned by the set
\[
U:=\{ -B+B^A\st B\in T,\ A\in S\}.
\]
\end{lem}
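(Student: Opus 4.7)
The plan is to show that the $\F$-span $V$ of $U$ coincides with $T$ by exhibiting an explicit $\F$-basis of $T$ inside $V$. One readily checks (using $(-B + B^A)^C = -B^C + (B^C)^{C^{-1}AC}$) that $V$ is an $\F[S]$-submodule of $T$, although strictly speaking this is not required for the argument below. The important preliminary observation is that, since $\mathrm{char}\,\F = 2$, the identity $\I$ itself lies in $T$ (its trace is $2 = 0$), so that $\{\I,\,e_{12},\,e_{21}\}$, with $e_{ij}$ the standard matrix units, is an $\F$-basis of the three-dimensional space $T$. It therefore suffices to place each of these three basis vectors in $V$.

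For the off-diagonal units I would use a single diagonal element: take $A = \mathrm{diag}(\alpha,\alpha^{-1}) \in S$. A direct computation using $\alpha^3 = 1$ and $\alpha^2 = \alpha + 1$ in $\F_4$ gives $e_{12}^A = \alpha\,e_{12}$ and $e_{21}^A = \alpha^2\,e_{21}$, so $-e_{12} + e_{12}^A = \alpha^2\,e_{12}$ and $-e_{21} + e_{21}^A = \alpha\,e_{21}$ both lie in $V$. Since neither coefficient vanishes, this gives $e_{12}, e_{21} \in V$.

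To produce $\I$, I would use a unipotent element: let $A = \begin{pmatrix}1 & 1 \\ 0 & 1\end{pmatrix} \in S$, which equals its own inverse in characteristic $2$. A short matrix multiplication yields $e_{21}^A = \I + e_{12} + e_{21}$, so that $-e_{21} + e_{21}^A = \I + e_{12}$ lies in $V$. Combined with $e_{12} \in V$ from the previous paragraph, this places $\I$ in $V$, and the spanning claim follows.

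The only conceptually delicate point is producing $\I$: one might initially worry that the $S$-fixed line $\F\,\I \subseteq T$ is out of reach of commutator-type differences, since $-\I + \I^A = 0$ for every $A$. The unipotent calculation resolves this by showing that the $\I$-component of $-B + B^A$ can still be nonzero for a non-central $B$. Apart from this observation, the entire verification reduces to a handful of explicit $2\times 2$ matrix multiplications over $\F_4$.
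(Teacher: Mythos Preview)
Your proof is correct and follows essentially the same strategy as the paper: exhibit the three basis vectors $\I$, $e_{12}$, $e_{21}$ of $T$ inside the $\F$-span of $U$ by explicit $2\times 2$ computations over $\F_4$. The only difference is cosmetic: the paper uses unipotent conjugators $\bigl(\begin{smallmatrix}1&x\\0&1\end{smallmatrix}\bigr)$ throughout (its computation with $x=1$ is exactly your $-e_{21}+e_{21}^A=\I+e_{12}$), whereas you obtain $e_{12}$ and $e_{21}$ more directly via the diagonal element $\mathrm{diag}(\alpha,\alpha^{-1})$, which is arguably a cleaner first step.
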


\begin{proof}
For every $x\in\F$ we have
\[
\begin{pmatrix}   x & x^2 \\ 0 & x  \end{pmatrix} =
\begin{pmatrix}   0 & 0 \\ 1 & 0  \end{pmatrix} +
\begin{pmatrix}   1 & x \\ 0 & 1  \end{pmatrix}
\begin{pmatrix}   0 & 0 \\ 1 & 0  \end{pmatrix}
\begin{pmatrix}   1 & x \\ 0 & 1  \end{pmatrix}
\in U,
\]
and hence
\[
\begin{pmatrix}   0 & 1 \\ 0 & 0  \end{pmatrix} =
\alpha \begin{pmatrix}   1 & 1 \\ 0 & 1  \end{pmatrix} +
\begin{pmatrix}   \alpha & \alpha^2 \\ 0 & \alpha  \end{pmatrix}
\in\F U.
\]
By symmetry 
$\bigl(
\begin{smallmatrix} 0&0\\ 1&0\end{smallmatrix}\bigr)\in\F U$,
and also
$\I=      
\bigl(\begin{smallmatrix} 1&1\\ 0&1\end{smallmatrix}\bigr) +
\bigl(\begin{smallmatrix} 0&1\\ 0&0\end{smallmatrix}\bigr)
\in \F U$.
Since $T$ is spanned by
$\I$, 
$\bigl(\begin{smallmatrix} 0&1\\ 0&0\end{smallmatrix}\bigr)$ and
$\bigl(\begin{smallmatrix} 0&0\\ 1&0\end{smallmatrix}\bigr)$, the assertion follows.
\end{proof}

\begin{lem}
\label{lemB}
The $\F$-module spanned by the set
\[
U:=\{ BC\st B,C\in T \}
\]
is the entire $\F^{2\times 2}$.
\end{lem}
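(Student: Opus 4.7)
The key observation is that since $\F = \F_4$ has characteristic two, the trace-zero condition on a $2\times 2$ matrix reads $a_{11}+a_{22}=0$, i.e.\ $a_{11}=a_{22}$. In particular the identity matrix $\I$ has trace $0$, and so do the standard matrix units $E_{12}$ and $E_{21}$. Thus $T$ contains $\I,E_{12},E_{21}$; in fact these three matrices form an $\F$-basis of $T$, as already remarked in the proof of Lemma~\ref{lemA}.

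My plan is simply to exhibit, for each of the four standard matrix units $E_{ij}\in\F^{2\times 2}$, a product of two elements of $T$ that equals $E_{ij}$. The four computations are
\[
E_{12}\cdot E_{21}=E_{11},\qquad E_{21}\cdot E_{12}=E_{22},\qquad \I\cdot E_{12}=E_{12},\qquad \I\cdot E_{21}=E_{21},
\]
each of which is an elementary matrix multiplication. All four of these products belong to $U$, so every standard matrix unit lies in $U$, and hence the $\F$-module $\F U$ contains a basis of $\F^{2\times 2}$.

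There is no real obstacle here: the statement holds over any field of characteristic two (and the characteristic-two assumption is what lets $\I$ sit inside $T$ so that $E_{12}$ and $E_{21}$ themselves appear as products). The only thing worth being careful about is to verify that the four chosen factors genuinely lie in $T$, which is immediate from the observation about trace in characteristic two.
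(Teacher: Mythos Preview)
Your proof is correct and essentially identical to the paper's own argument. The paper exhibits the same four matrix units as products of elements of $T$, packaging them as the product set $\{E_{12},E_{21}\}\cdot\{\I,E_{12},E_{21}\}$; the only cosmetic difference is that the paper multiplies by $\I$ on the right rather than on the left.
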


\begin{proof}
 The subset
\[
\left\{
\begin{pmatrix} 0&1\\ 0&0\end{pmatrix},
\begin{pmatrix} 0&0\\ 1&0\end{pmatrix}
\right\}
\cdot
\left\{
\I,
\begin{pmatrix} 0&1\\ 0&0\end{pmatrix},
\begin{pmatrix} 0&0\\ 1&0\end{pmatrix}
\right\}
\]
 of $U$ contains the natural basis of $\F^{2\times 2}$.
\end{proof}

\begin{proof}[Proof of Lemma \ref{lem:SL6}]
\eqref{it:GN1}
For
\begin{equation}
\label{eq14}
X:=\begin{pmatrix} A&&\\ &A&\\ &&A\end{pmatrix}\in H \text{ and } 
Y:=\begin{pmatrix} \I&B&D\\ &\I&C\\ &&\I\end{pmatrix}\in M
\end{equation}
we have
\[
  Y^X=
\begin{pmatrix} \I&B^A&D^A\\ &\I&C^A\\ &&\I\end{pmatrix} .
\]
From $\tr(B^A)=\tr(B)=0$ and $\tr(C^A)=\tr(C)=0$ it follows that
$Y^X\in M$.
Thus, $H$ normalizes $M$, and both assertions follow.
\medskip

\eqref{it:GN2}
Since $H\cong S$ is perfect, we have
\[
[G,G]=[HM,HM]=[H,H][H,M][M,M]=H[H,M]M'.
\]
So it suffices to show that
\begin{equation}
\label{eq15a}
[H,M]=M.
\end{equation}
Now, for $X\in H$, $Y\in M$ as in \eqref{eq14}, we have
\[ Y^{-1} = \begin{pmatrix} \I&-B&BC-D\\ &\I&-C\\ &&\I\end{pmatrix} \]
 and hence
\begin{equation}
\label{eq3}
[Y,X]=Y^{-1}Y^X= 
\begin{pmatrix} \I&-B+B^A& D^A-BC^A+BC-D \\ &\I&-C+C^A\\ &&\I\end{pmatrix} .
\end{equation}
Using Lemma \ref{lemA} and \eqref{eq3} twice, first with $C=D=0$, and then with $B=C=0$, we see that
\[
\begin{pmatrix} \I & P & 0 \\ &\I & 0\\ &&\I  \end{pmatrix},
\begin{pmatrix} \I & 0 & 0 \\ &\I & Q\\ &&\I  \end{pmatrix}
\in [H,M] 
\text{ for all } P,Q\in T.
\]
Multiplying these two matrices in both orders yields
\begin{equation}
  \label{eq12}
\begin{pmatrix} \I & P & 0 \\ &\I & Q\\ &&\I  \end{pmatrix},
\begin{pmatrix} \I &  P & PQ \\ &\I & Q\\ &&\I  \end{pmatrix}
\in [H,M] 
\text{ for all } P,Q\in T.
\end{equation}
Hence also
\begin{equation}
\label{eq12a}
\begin{pmatrix} \I & 0 & PQ \\ &\I & 0\\ &&\I  \end{pmatrix} =
\begin{pmatrix} \I & P & 0 \\ &\I & Q\\ &&\I  \end{pmatrix}^{-1}
\begin{pmatrix} \I & P & PQ \\ &\I & Q\\ &&\I  \end{pmatrix} \in [H,M].
\end{equation}
 Using Lemma \ref{lemB}, it follows that
\begin{equation}
\label{eq13}
 \begin{pmatrix} \I & 0 & R \\ &\I & 0\\ &&\I  \end{pmatrix}\in [H,M]
 \text{ for all } R\in\F^{2\times 2}.
\end{equation}
Multiplying the first matrix from \eqref{eq12} together with \eqref{eq13} yields
\[ \begin{pmatrix} \I & P & R \\ &\I & Q\\ &&\I  \end{pmatrix} =
\begin{pmatrix} \I & P & 0 \\ &\I & Q\\ &&\I  \end{pmatrix}
\begin{pmatrix} \I & 0 & R \\ &\I & 0\\ &&\I  \end{pmatrix}
\in [H,M] \] 
 for all $P,Q\in T,\ R\in \F^{2\times 2}$, proving Claim \eqref{eq15a} and part \eqref{it:GN2} of the lemma.

\eqref{it:GN3}
Again, let $X\in H$ and $Y\in M$ be as in \eqref{eq14}, and also let
\[
Z:=
\begin{pmatrix}  \I & b\I & E \\ &\I & c\I\\ &&\I  \end{pmatrix} \in N.
\]
Then
\[ Z^X=
  \begin{pmatrix}   \I & b\I & E^A \\ &\I & c\I \\ &&\I  \end{pmatrix} \text{ and }
Z^Y=
\begin{pmatrix}   \I & b\I & bC-cB+E \\ &\I & c\I \\ &&\I  \end{pmatrix}
\]
 are both in $N$.
Hence $N\unlhd G$.

\eqref{it:GN4}
To compute $[G,N]$, note that, for $X\in H$, $Y\in M$, $Z\in N$ as above, we have:
\[ [Z,X]=Z^{-1} Z^X=
\begin{pmatrix}   \I & 0 & -E+E^A \\ &\I & 0 \\ &&\I  \end{pmatrix}, \]
\begin{equation}
\label{eq22}
[Z,Y]=Z^{-1} Z^Y=
\begin{pmatrix}   \I & 0 & bC-cB \\ &\I & 0 \\ &&\I  \end{pmatrix}.
\end{equation}
Recalling Lemma \ref{lemA} it follows that
\begin{equation}
\label{eq23}
[G,N]=\left\{
\begin{pmatrix}   \I & 0 & P \\ &\I & 0 \\ &&\I  \end{pmatrix} \st P\in T\right\}.
\end{equation}
From  here it is immediate that $[G,N,N]$ is trivial.
 On the other hand, for $Y\in N$ with $B=b'\I$, $C=c'\I$ in~\eqref{eq22}
 we obtain
\[
[Z,Y]=
\begin{pmatrix}   \I & 0 & (bc'-b'c)\I \\ &\I & 0 \\ &&\I  \end{pmatrix}.
\]
 Hence
\[
[N,N]=
\left\{ \begin{pmatrix}   \I & 0 & d\I \\ &\I & 0 \\ &&\I  \end{pmatrix} \st d\in \F\right\}
\]
 is strictly bigger than $[G,N,N]$.
\end{proof}

\section{A family of finite examples}
\label{sec:finG2}

The construction from the previous section can be readily adapted to yield, for every $d\in\N$, a finite perfect group $G$ and a normal subgroup $N$ such that $\gamma_d(N)>[G,\kN{d}{N}]$.
We just give an outline.

Let $n\in\N$ be arbitrary. The field $\F$, group $S$ and set $T$ will be as in the previous section.
This time we will work in the group $\SL{2n}{\F}$, which we will treat as consisting of $n\times n$ block-matrices with $2\times 2$ entries.

For a matrix $A\in S$ we will write $\Delta(A)$ to denote the block diagonal matrix having $n$ copies of $A$ along the main diagonal. 
Also, for arbitrary $A\in \F^{2\times 2}$ and $i,j\in \{ 1,\dots,n\}$, $i\neq j$, we will use $E_{ij}(A)$ to denote the matrix which has the identity blocks on the main diagonal, has $(i,j)$ block equal to $A$, and has zero blocks elsewhere.

The basic ingredients for our construction are the following subgroups of $\SL{2n}{\F}$. Let
\[
H:= \{ \Delta(A)\st A\in S\} \cong S.
\]
 The second subgroup $M$ consists of all block upper unitriangular matrices with arbitrary
 matrices of trace $0$ on the super-diagonal, i.e., 
\[
M:=\bigl\{ (A_{ij}) \st A_{ii}=\I,\ A_{i,i+1}\in T,\ A_{ij}\in \F^{2\times 2}\ (j-i>1),\ 
A_{ij}=0\ (i<j)\bigr\}.
\]
The third subgroup $N$ consists of those elements of $M$ for which all the blocks on the super-diagonal belong to
the center of $S$, i.e,
\[
 N := \bigl\{ (A_{ij})\in M \st A_{i,i+1}=a_i\I\ (a_i \in\F) \}.
\]  
Finally, we let
\[
G:=HM.
\] 
  
\begin{lem}
\label{lem:SL2n}
The following hold for $G$, $N$ and $M$ as defined above:
\begin{enumerate}
\item   \label{it:G1}
$M\unlhd G\leq \SL{2n}{\F}$.
\item   \label{it:G2}
$G$ is perfect.
\item   \label{it:G3}
$N\unlhd G$.
\item   \label{it:G4}
$\gamma_d(N)>[G,\kN{d}{N}]$ for $d<n$.  
\end{enumerate}
\end{lem}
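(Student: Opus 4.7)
The plan is to generalize the proof of Lemma~\ref{lem:SL6}. I introduce the standard filtration
\[
F_k := \{\, I + X \in \SL{2n}{\F} \mid X_{ij} = 0 \text{ for all } j - i < k\,\},
\]
so that $M, N \subseteq F_1$, $F_n = \{I\}$, and $[F_k, F_l] \subseteq F_{k+l}$. For part~\eqref{it:G1}, conjugating $Y \in M$ by $X = \Delta(A) \in H$ yields $(Y^X)_{ij} = A^{-1} Y_{ij} A$, which preserves both the trace-zero condition on the super-diagonal and arbitrariness in $\F^{2\times 2}$ elsewhere, so $H$ normalises $M$. For part~\eqref{it:G3}, a direct block computation of $(X^{-1} Z X)_{i,i+1}$ and $(Y^{-1} Z Y)_{i,i+1}$ for $X \in H$, $Y \in M$, $Z \in N$ shows the super-diagonal of every $G$-conjugate of $Z$ remains $a_i \I$: the $H$-case uses that scalars commute with $A$, and the $M$-case uses that the contributions from $Y$ and $Y^{-1}$ cancel out $Y_{i,i+1}$.

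For part~\eqref{it:G2}, since $H \cong \SL{2}{\F_4}$ is perfect, it suffices to show $M \subseteq [G,G]$. The identity $[E_{i,i+1}(B), \Delta(A)] = E_{i,i+1}(-B + B^A)$ combined with Lemma~\ref{lemA} gives $E_{i,i+1}(T) \subseteq [H, M]$. A direct block computation establishes the Steinberg-type identity $[E_{ij}(B), E_{jk}(C)] = E_{ik}(BC)$ for $i < j < k$; applying it with $B, C \in T$ and Lemma~\ref{lemB} yields $E_{i,i+2}(\F^{2\times 2}) \subseteq [M, M]$. For $j - i \geq 3$, I fix an invertible $B \in T$ (for instance $\bigl(\begin{smallmatrix} 0 & 1 \\ 1 & 0 \end{smallmatrix}\bigr)$) and take $[E_{i,i+1}(B), E_{i+1,j}(R)] = E_{i,j}(BR)$; both factors lie in $M$ (the second because $j - (i+1) \geq 2$), and $BR$ exhausts $\F^{2\times 2}$ as $R$ varies. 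Hence $E_{i,j}(\F^{2\times 2}) \subseteq [G, G]$ for all $j - i \geq 2$, so $M \subseteq [G, G]$.

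For part~\eqref{it:G4}, I exhibit $e := E_{1,d+1}(\I) \in \gamma_d(N) \setminus [G, \kN{d}{N}]$. Iterating the Steinberg identity with $B = C = \I$ yields $[E_{1,2}(\I), E_{2,3}(\I), \ldots, E_{d,d+1}(\I)] = E_{1,d+1}(\I)$, and each factor lies in $N$ since its super-diagonal is the scalar $\I$; hence $e \in \gamma_d(N)$. For the non-membership I prove $[G, \kN{d}{N}] \subseteq F_{d+1}$ by induction on $d$. The base case $[G, N] \subseteq F_2$, i.e.\ zero super-diagonal, follows from $[M, N] \subseteq [F_1, F_1] \subseteq F_2$ combined with the $H$-conjugation computation from part~\eqref{it:G3}: since $(X^{-1} Z X)_{i,i+1} = Z_{i,i+1}$, the super-diagonal of $[X, Z]$ vanishes. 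The inductive step reads $[G, \kN{d}{N}] = [[G, \kN{d-1}{N}], N] \subseteq [F_d, F_1] \subseteq F_{d+1}$. When $d < n$, position $(1, d+1)$ lies inside the matrix and $F_{d+1}$ vanishes there, so $e \notin [G, \kN{d}{N}]$. The standard inclusion $[G, \kN{d}{N}] \subseteq \gamma_d(N)$, obtained by iterating $[G, N] \subseteq N$ from part~\eqref{it:G3}, ensures strictness of the resulting containment.

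The main obstacle is the base case $[G, N] \subseteq F_2$: since $G$ is not itself contained in $F_1$, one cannot invoke $[F_k, F_l] \subseteq F_{k+l}$ mechanically. The argument must split into conjugation by $H$ and by $M$, and the $H$-part crucially uses that the super-diagonal entries of $N$ are scalar matrices, which are central in $\F^{2\times 2}$ and therefore invariant under any $A$-conjugation.
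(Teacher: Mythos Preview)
Your proof is correct. Parts \eqref{it:G1}--\eqref{it:G3} follow the paper's approach closely (the paper simply refers back to the $\SL{6}{\F}$ case), with only cosmetic differences: for \eqref{it:G2} the paper establishes the stronger fact $[H,M]=M$ via the same Steinberg-type identities, whereas you are content with $M\subseteq[G,G]$, which is all that is needed.

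For part \eqref{it:G4} your route genuinely differs from the paper's. The paper computes $[G,\kN{d}{N}]$ and $\gamma_d(N)$ \emph{exactly}, identifying each as a concrete set of block upper-unitriangular matrices (the former vanishes on bands $1,\dots,d$ with $T$-entries on band $d+1$; the latter vanishes on bands $1,\dots,d-1$ with scalar entries on band $d$), and then reads off the strict inclusion. You instead bound $[G,\kN{d}{N}]\subseteq F_{d+1}$ via the filtration and exhibit the single witness $E_{1,d+1}(\I)\in\gamma_d(N)\setminus F_{d+1}$. Your argument is shorter and avoids pinning down the precise trace constraints on each band; the paper's explicit description carries more information but is not needed for the lemma as stated. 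One small point worth making explicit in your base case: to pass from $[H,N]\subseteq F_2$ and $[M,N]\subseteq F_2$ to $[G,N]\subseteq F_2$ you are implicitly using $[hm,z]=[h,z]^m[m,z]$ together with $F_2\unlhd F_1$ (or $F_2\unlhd G$), which is routine but should be stated.
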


\begin{proof}
 Items~\eqref{it:G1} and~\eqref{it:G3} follow exactly as for Lemma~\ref{lem:SL6}.
 For proving~\eqref{it:G2}, we again just need 
\[
[H,M]=M.
\]
 For this, let $A\in S$, $B\in T$ and $E_{i,i+1}(B)\in M$ for $1\leq i\leq n-1$, and note that
\[
\bigl[ E_{i,i+1}(B),\Delta(A)\bigr]=E_{i,i+1}(-B+B^A).
\]
By Lemma \ref{lemA} it follows that
\begin{equation}
\label{eq31}
  \bigl\{ E_{i,i+1}(P)\st P\in T,\ 1\leq i\leq n-1\bigr\}\subseteq [H,M].
\end{equation}
 As in~\eqref{eq12a} we obtain
\[
\bigl[ E_{i,i+1}(P),E_{i+1,i+2}(Q)\bigr]=E_{i,i+2}(PQ).
\]
By Lemma \ref{lemB} it follows that
\[
\bigl\{ E_{i,i+2}(R)\st R\in\F^{2\times 2},\ 1\leq i\leq n-2\bigr\}\subseteq [H,M].
\]
A straightforward inductive argument now shows that
\begin{equation}
  \label{eq32}
  \bigl\{ E_{ij}(R)\st R\in\F^{2\times 2},\ j-i>1\bigr\}\subseteq [H,M].
\end{equation}
From \eqref{eq31} and \eqref{eq32} it easily follows that $[H,M]=M$, as claimed.

For~\eqref{it:G4}, we obtain as in~\eqref{eq23}
\[
  [G,N] = \bigl\{ (A_{ij}) \in N \st
   A_{i,i+1}=0,\ 
   A_{i,i+2}\in T \bigr\}.
\]
Assuming $n>d$, from this it is easy to see that
\[
  [G,\kN{d}{N}]= \bigl\{ (A_{ij}) \st A_{ij}=0\ (0<j-i\leq d),\ A_{i,i+d+1}\in T \bigr\}.
\]  
Finally,
\[
  \gamma_d(N)= \bigl\{ (A_{ij})\in N \st  A_{ij}=0\ (0<j-i\leq d-1),\ 
 A_{i,i+d}=a_i\I\ (a_i\in\F) \bigr\},
\]
and therefore $\gamma_d(N)>[G,\kN{d}{N}]$, as required.
\end{proof}

\section*{Addendum \today}

 After publication of this paper~\cite{KMR:SQSP}
 we were told that our Theorem~\ref{thm1} was already observed by Y.~Gorchakov~\cite[Lemma 1]{Go:LNG}.
 Moreover, for every natural number $n$, E.~Khukhro~\cite{Kh:NSP} constructed a finite group $G$, $m\in\N$ and
 subdirect products $S,N$ in $G^m$ such that $N$ is normal in $S$ and the nilpotency class $S/N$ is $n-1$.
 Later Gorchakov~\cite{Go:QGS} provided such examples with $m=n$, which is the smallest possible number of factors
 to realize such a quotient of class $n-1$. Both Khukhro's and Gorchakov's factor groups $G$ are nilpotent
 themselves in contrast to the perfect factors that we consider in this note. With this restriction we need
 subdirect products of $G^{2^{n-1}}$ to realize a quotient of class $n-1$. We do not know whether in our setting
 the number of factors can be reduced from $2^{n-1}$.


\end{document}